\newtheorem{teo}{Theorem}[section]
\newtheorem{lem}[teo]{Lemma}
\newtheorem{cor}[teo]{Corollary}
\newtheorem{dfn}[teo]{Definition}
\newtheorem{rmk}[teo]{Remark}
\def\<{\langle}
\def\>{\rangle}
\def\a{\alpha}
\def\g{\gamma}
\def\t{\tau}
\def\f{{\varphi}}
\def\G{{\Gamma}}
\def\Z{{\mathbb Z}}
\def\Im{\mathop{\rm Im}\nolimits}
\def\Coker{\operatorname{Coker}}
\def\Id{\operatorname{Id}}
\def\1{\mathbf 1}
\newcommand{\ov}[1]{\overline{#1}}
\begin{document}

\title[Reidemeister in wreath products]
{Reidemeister classes in some wreath products by $\Z^k$}

\author{M.I. Fraiman} 
\author{E.V. Troitsky}
\thanks{The work was supported by the Foundation for the
	Advancement of Theoretical Physics and Mathematics ``BASIS''}
\address{Moscow Center for Fundamental and Applied Mathematics, MSU Department,\newline
	Dept. of Mech. and Math., Lomonosov Moscow State University, 119991 Moscow, Russia}
\email{troitsky@mech.math.msu.su}
\keywords{Reidemeister number,  
twisted conjugacy class, 
Burnside-{Frobenius} theorem, %
unitary dual, 
finite-dimensional representation}
\subjclass[2000]{20C; 
20E45; 
22D10; 
37C25; 
}

\begin{abstract}
Among restricted wreath products $G\wr \Z^k $,  where $G$ is a finite Abelian group, we find three large classes of groups admitting an automorphism $\f$ with finite Reidemeister number $R(\f)$ (number of $\f$-twisted conjugacy classes). In other words, groups from these classes do not have the $R_\infty$ property. 

If a general automorphism $\f$ of $G\wr \Z^k$ has a finite order (this is the case for $\f$ detected in the first part of the paper) 
and $R(\f)<\infty$, 
we prove that $R(\f)$ coincides with the number of equivalence classes of finite-dimensional irreducible unitary representations
of  $G\wr \Z^k$, which are fixed by the dual map $[\rho]\mapsto [\rho\circ \f]$
(i.e. we prove the conjecture about finite twisted Burnside-Frobenius theorem, TBFT$_f$, for these $\f$).
\end{abstract}

\maketitle

\section{Introduction}\label{sec:intro}
Suppose, $\G$ is a  group and $\phi: \G\rightarrow \G$ is an endomorphism.
Two elements $x,y\in \G$ are 
 $\phi$-{\em conjugate} or {\em twisted conjugate,}
if and only if there exists an element $g \in \G$ such that
$$
y=g  x   \phi(g^{-1}).
$$
The corresponding classes are called \emph{Reidemeister} or 
{\em twisted conjugacy} classes.
The number $R(\phi)$ of them is called the {\em Reidemeister number}
of $\phi$.

The study of Reidemeister numbers is an important problem related with Topological Dynamics, Number Theory and Representation Theory (see \cite{FelshB}). One of the main problems in the field is to prove or disprove the so-called TBFT (a conjecture about the twisted Burnside-Frobenius theory (or theorem)), which has numerous important consequence for Reidemeister zeta function and 
for other problems in Topological Dynamics (see a more extended discussion in \cite{FeTrZi20}). Namely the problem is to identify $R(\f)$ (when $R(\f)<\infty$) in a natural way with the number of fixed points of the induced map $\widehat{\f}$ of an appropriate dual object.  In the initial formulation of the conjecture \cite{FelHill}, the dual object was the unitary dual $\widehat{\G}$ and
$\widehat{\f}:[\rho]\mapsto [\rho\circ \f]$. The conjecture about TBFT was proved in many cases, but failed for an example in \cite{FelTroVer}, which led to the new formulation: TBFT$_f$, where $\widehat{\G}$ was replaced by its finite-dimensional part,
which is evidently invariant under $\widehat{\f}$. This is the version, which we will study in this paper for a class of groups.
In \cite{FeTrZi20} an example of a group that has neither TBFT nor TBFT$_f$ was presented.
The most general proved cases of TBFT$_f$ are the case of polycyclic-by-finite groups \cite{polyc}
and the case of nilpotent torsion-free groups of finite Pr\"ufer rank \cite{FelTroDicjtomy2021RJMP}. 

Another important problem in the field is to localize the class of groups, where one can consider the TBFT conjecture, i.e. where
automorphisms with $R(\f)<\infty$ do exist. The opposite case is called the $R_\infty$ property.
It has some topological consequences itself (see e.g. \cite{GoWon09Crelle}).
A part of recent results about Reidemeister clases and $R_\infty$ can be found in
 \cite{FelLeonTro,Romankov,BardNasyNes2013,DekimpeGoncalves2014BLMS,FelTroJGT,TroLamp,TroitskyWBranch2019RJMP,Nasybullov2020TMNA}
(see also an overview in \cite{FelNasy2016JGT}). 

We consider the following restricted wreath product $G\wr \Z^k = \Sigma \rtimes_\a \Z^k$,  where $G$ is a finite Abelian group, 
$\Sigma$ denotes $\oplus_{x\in \Z^k} G_x$, and $\a(x)(g_y) =g_{x+y}$. Here $g_x$ is $g \in G \cong G_x$.

The $R_\infty$ property was completely studied for $k=1$ in \cite{gowon1}, for $G=\Z_p$ with a  prime $p$ and arbitrary $k$ 
in \cite{TroLamp}, for $G=\Z_m$  and arbitrary $k$ in \cite{Fraiman}. In all these cases the TBFT$_f$ was proved. 

The complexity of the study increases drastically when we move from $k=1$ to $k>1$, 
because $\Z$ has only one non-trivial automorphism in contrast with $\Z^k$. 

The groups under consideration can be viewed as generalized lamplighter groups.
For a generalization of the lamplighter group in other directions, the twisted conjugacy was considered in \cite{TabackWong2011},
\cite{SteinTabackWong2015}, and other papers.

In the present paper, we prove (Theorem \ref{teo:cases}) that the groups under consideration do not have the $R_\infty$ property in the following three cases:
\begin{enumerate}[1)]
\item all prime-power components of $G$ for $2$ and $3$ have multiplicity at least $2$;
\item there is no prime-power components for $2$  and $k$ is even;
\item all prime-power components of $G$ for $2$ have multiplicity at least $2$ and $k=4s$ for some $s$.
\end{enumerate}

To prove this, we construct corresponding examples, and all of them have a finite order. This motivates us to prove the TBFT$_f$ for all groups of the form $G\wr \Z^k$ and their automorphisms of finite order (Corollary \ref{cor:main_TBFT}).
The proof is based on a description of Reidemeister classes of $\f$ as cylindrical sets (Theorem \ref{teo:main_for_TBFT}). 

\textbf{Acknowledgment.} The work was supported by the Foundation for the Advancement of Theoretical
Physics and Mathematics “BASIS”.

\section{Preliminaries}

We start from some general statements about Reidemeister classes of extensions.
Suppose,  a normal subgroup $H$ of $G$ is $\f$-invariant under an automorphism $\f:G\to G$
and $p:G\to G/H$ is the natural projection.
Then $\f$ induces automorphisms $\f':H\to H$ and $\widetilde{\f}:G/H \to G/H$.

\begin{dfn}
	\rm Denote $C(\f):=\{g\in G \colon \f(g)=g\}$, i.e. $C(\f)$ is the subgroup of $G$, formed by $\f$-fixed elements.
\end{dfn}

We will use the notation $\tau_g(x)=gxg^{-1}$ for an inner automorphism as well as for its restriction on a normal subgroup.

The following important properties were obtained in \cite{FelHill,go:nil1}, see also \cite{polyc,GoWon09Crelle}.

\begin{teo}\label{teo:extensions}
	For $G$, $H$, $\f$, $\f'$, and $\widetilde{\f}$ as above, we have the following.
	\begin{itemize}
		\item[1.] \emph{Epimorphity:} the projection $G\to G/H$ maps Reidemeister classes of $\f$ onto Reidemeister classes of $\widetilde{\f}$, in particular $R(\widetilde{\f})\le R(\f)$;
		\item[2.] \emph{Estimation by fixed elements:} if $|C(\widetilde{\f})|=n$, then $R(\f')\le R(\f)\cdot n$;
		\item[3.] \emph{Fixed elements-free case:} if $C(\widetilde{\f})=\{e\}$, then 		each Reidemeister class of $\f'$ is an intersection of the appropriate Reidemeister class of $\f$ and $H$; 
		\item[4.] \emph{Summation:} if $C(\widetilde{\f})=\{e\}$, then 
		$R(\f)=\sum_{j=1}^R R(\tau_{g_j} \circ \f')$, where $g_1,\dots g_R$ are some elements of $G$ such that
		$p(g_1),\dots,p(g_R)$ are representatives of all Reidemeister classes of $\widetilde{\f}$, $R=R(\widetilde{\f})$;
	\end{itemize}	
\end{teo}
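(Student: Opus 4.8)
The four assertions all flow from one elementary computation: for any $g,x\in G$ one has $p\!\left(g\,x\,\f(g^{-1})\right)=p(g)\,p(x)\,\widetilde\f(p(g))^{-1}$, so that $p$ carries $\f$-twisted conjugacy to $\widetilde\f$-twisted conjugacy. This already yields part~1: $p$ descends to a well-defined map $\cR(\f)\to\cR(\widetilde\f)$ on Reidemeister classes, which is onto because every $\bar x\in G/H$ equals $p(x)$ for some $x$, whence $R(\widetilde\f)\le R(\f)$.

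For parts~2 and~3 the plan is to analyze, for $h\in H$, the intersection of its $\f$-class with $H$. From the displayed identity, $g\,h\,\f(g^{-1})\in H$ if and only if $p(g)=\widetilde\f(p(g))$, i.e.\ $p(g)\in C(\widetilde\f)$. Choosing coset representatives $g_1=e,g_2,\dots,g_n$ of $H$ in $p^{-1}(C(\widetilde\f))$ (so $n=|C(\widetilde\f)|$), one obtains
\[
[h]_\f\cap H=\bigcup_{i=1}^{n} g_i\,[h]_{\f'}\,\f(g_i^{-1}),
\]
where $[h]_{\f'}$ denotes the $\f'$-class of $h$ in $H$. The next step is to check that each set $g_i\,[h]_{\f'}\,\f(g_i^{-1})$ lies inside a single $\f'$-class: writing $g_i\,h'\,\f(g_i^{-1})=\tau_{g_i}(h')\cdot g_i\f(g_i^{-1})$ and noting $g_i\f(g_i^{-1})\in H$ (because $p(g_i)\in C(\widetilde\f)$), a short manipulation with the automorphism $\tau_{g_i}$ of $H$ does it. Hence $[h]_\f\cap H$ is a union of at most $n$ classes of $\f'$; since every $\f'$-class sits inside exactly one $\f$-class, summing over the at most $R(\f)$ classes of $\f$ that meet $H$ gives $R(\f')\le n\cdot R(\f)$, which is part~2. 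When $C(\widetilde\f)=\{e\}$ we have $n=1$, $g_1=e$, and the union collapses to $[h]_{\f'}$, so $[h]_\f\cap H=[h]_{\f'}$; this is part~3.

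For part~4 I would combine part~1 with a fibrewise count. By part~1, $R(\f)=\sum_{j=1}^{R}N_j$, where $N_j$ is the number of $\f$-classes projecting to the class of $p(g_j)$; each such class surjects onto the whole $\widetilde\f$-class of $p(g_j)$ and therefore meets $g_jH$. The plan is to exhibit a bijection between these $N_j$ classes and $\cR(\tau_{g_j}\circ\f')$: to a $\f$-class $C$ over $[p(g_j)]$ together with a point $g_jh\in C\cap g_jH$ one attaches the $(\tau_{g_j}\circ\f')$-class of $h$ in $H$. The computation underpinning this is that, for $\eta\in H$, $\eta\,(g_jh)\,\f(\eta^{-1})=g_j\cdot\big(\tau_{g_j}^{-1}(\eta)\,h\,\f(\eta^{-1})\big)$, and, setting $\xi=\tau_{g_j}^{-1}(\eta)$, the right-hand factor becomes $\xi\,h\,(\tau_{\f(g_j)}\circ\f')(\xi^{-1})$; together with the standard identity $R(\tau_{\f(g_j)}\circ\f')=R(\f'\circ\tau_{g_j})=R(\tau_{g_j}\circ\f')$ this matches $H$-orbits inside $g_jH$ with $(\tau_{g_j}\circ\f')$-classes of $H$. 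Surjectivity of the attachment map, and that it lands in the fibre over $[p(g_j)]$, are then routine.

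The step I expect to be the main obstacle is well-definedness (equivalently injectivity) of this last map: one must show that two elements of $g_jH$ that are $\f$-conjugate in $G$ are already $\f$-conjugate by an element of $H$. Applying $p$ to a relation $g_jh'=x\,(g_jh)\,\f(x^{-1})$ forces $p(x)\in\Fix(\tau_{p(g_j)}\circ\widetilde\f)$, and it is precisely the hypothesis $C(\widetilde\f)=\{e\}$ that is meant to render this trivial — transparently so when $G/H$ is abelian (as in the case $G/H=\Z^k$ relevant here, where $\tau_{p(g_j)}=\mathrm{id}$ and $\Fix(\tau_{p(g_j)}\widetilde\f)=C(\widetilde\f)$), and in general by the results cited before the statement. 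Once $x\in H$ is secured, the argument closes and gives $R(\f)=\sum_{j=1}^{R}R(\tau_{g_j}\circ\f')$.
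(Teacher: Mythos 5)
The paper itself gives no proof of Theorem~\ref{teo:extensions} --- it is imported from \cite{FelHill,go:nil1} --- so there is no internal argument to compare against; your proposal has to stand on its own, and it essentially does. Parts 1--3 are complete and correct: the identity $p(gx\f(g^{-1}))=p(g)p(x)\widetilde\f(p(g))^{-1}$, the observation that $ghf(g^{-1})\in H$ forces $p(g)\in C(\widetilde\f)$, the decomposition of $[h]_\f\cap H$ over coset representatives of $H$ in $p^{-1}(C(\widetilde\f))$, and the verification that each piece $g_i[h]_{\f'}\f(g_i^{-1})$ lies in one $\f'$-class (via $g_ih_2\f(g_i^{-1})=\tau_{g_i}(\xi)\,(g_ih_1\f(g_i^{-1}))\,\f(\tau_{g_i}(\xi))^{-1}$) all check out. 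For part 4, your fibrewise count, the change of variable $\eta=\tau_{g_j}(\xi)$ producing the automorphism $\tau_{\f(g_j)}\circ\f'=\f'\circ\tau_{g_j}$, and the identity $R(\alpha\beta)=R(\beta\alpha)$ are exactly the standard mechanism.

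The one genuine issue is the injectivity step you flagged yourself: the hypothesis $C(\widetilde\f)=\{e\}$ gives triviality of $\operatorname{Fix}(\widetilde\f)=\operatorname{Stab}(e)$ under twisted conjugation, whereas part 4 needs triviality of $\operatorname{Fix}(\tau_{p(g_j)}\circ\widetilde\f)=\operatorname{Stab}(p(g_j))$ for every representative $g_j$; stabilizers of points in \emph{different} twisted conjugacy classes are not conjugate to each other, so the implication does not hold for an arbitrary nonabelian quotient, and as stated the summation formula really requires the stronger hypothesis (or an orbit count of the action of $\operatorname{Fix}(\tau_{p(g_j)}\widetilde\f)$ on $\cR(\tau_{g_j}\circ\f')$ in place of $R(\tau_{g_j}\circ\f')$). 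You correctly note that this is vacuous when $G/H$ is abelian, since then $\tau_{p(g_j)}=\operatorname{id}$ and all these fixed-point groups equal $C(\widetilde\f)$. As the paper only ever applies the theorem with $G/H\cong\Z^k$, your proof fully covers every use made of the statement; just be aware that for the literal general statement the hypothesis of part 4 should be read as ``$\operatorname{Fix}(\tau_{p(g)}\circ\widetilde\f)=\{e\}$ for all $g$'' (which is how the cited sources handle it).
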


Also we will need the following statement \cite{Jabara} (Lemma 4 and the step (2) in the proof of Theorem A'):
\begin{lem}\label{lem:Jab_fin_ord}
Suppose, $\G$ is a residually finite group and $\f:\G\to\G$  is an automorphism with $R(\f)<\infty$. Then
$|C(\f)|<\infty$.
\end{lem}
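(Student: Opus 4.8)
The plan is to argue by contradiction: supposing $|C(\f)|=\infty$, I will show $R(\f)=\infty$. Recall the standard reformulation through the semidirect product $\G^{*}=\G\rtimes_{\f}\langle t\rangle$, with $t$ acting by $\f$: there $C(\f)$ is the centralizer $C_{\G}(t)$, and $x,y\in\G$ are $\f$-twisted conjugate in $\G$ exactly when $xt$ and $yt$ are conjugate in $\G^{*}$. Since the $\f$-twisted conjugacy classes partition $\G$ into $R(\f)$ blocks, they cut $C(\f)$ into at most $R(\f)$ blocks; so it suffices to show that an infinite $C(\f)$ is necessarily cut into infinitely many.

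Next I reduce to finite quotients. Fix pairwise distinct $c_{1},c_{2},\dots\in C(\f)$. Given $n$, residual finiteness of $\G$ yields a finite-index normal $N\trianglelefteq\G$ in which $c_{1},\dots,c_{n}$ have pairwise distinct images. When $\G$ is finitely generated --- the case relevant to this paper --- the $\f$-orbit $\{\f^{j}(N):j\in\Z\}$ is finite (there are only finitely many subgroups of index $[\G:N]$), so its intersection $M_{n}$ is an $\f$-invariant finite-index normal subgroup still separating $c_{1},\dots,c_{n}$. Put $Q_{n}=\G/M_{n}$ and let $\psi_{n}$ be the induced automorphism. Then Theorem~\ref{teo:extensions}(1) gives $R(\psi_{n})\le R(\f)$, while the images of $c_{1},\dots,c_{n}$ are $n$ distinct $\psi_{n}$-fixed elements of $Q_{n}$, so $|C(\psi_{n})|\ge n$.

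The proof now comes down to excluding finite groups $Q$ that carry an automorphism $\psi$ with $R(\psi)$ bounded and $|C(\psi)|$ unbounded. Since $\psi$ restricts to the identity on $C(\psi)$, for every $c\in C(\psi)$ one has $\psi(gcg^{-1})=\psi(g)c\psi(g)^{-1}$, so $\psi$ fixes setwise each conjugacy class of $Q$ meeting $C(\psi)$; by the classical Burnside-Frobenius theorem $R(\psi)$ equals the number of $\psi$-fixed conjugacy classes of $Q$, hence is at least the number of conjugacy classes meeting $C(\psi)$. The main obstacle is to force this lower bound to tend to infinity along a suitable subsequence of the $Q_{n}$: a priori the chosen $c_{i}$ may fuse in $Q_{n}$, and $C(\f)$ need not have any nontrivial abelian quotient, so counting $\psi_{n}$-fixed linear characters alone is insufficient. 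The resolution is to select the $M_{n}$ so that the image of $C(\f)$ in $Q_{n}$ realizes an arbitrarily large finite quotient of the infinite residually finite group $C(\f)$ --- such quotients exist --- while keeping the number of $Q_{n}$-conjugacy classes it meets from collapsing; this coordination of residual finiteness with the $\f$-action and the twisted count is the technical heart of the matter, and is precisely what is carried out in \cite{Jabara}. Granting it, $R(\psi_{n})\le R(\f)$ is contradicted for large $n$, and Lemma~\ref{lem:Jab_fin_ord} follows.
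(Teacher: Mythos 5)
The first thing to note is that the paper does not prove this lemma at all: it is imported from \cite{Jabara} (``Lemma 4 and step (2) in the proof of Theorem A$'$''), so there is no in-paper argument to match yours against. Judged as a proof, your proposal has a genuine --- and indeed self-acknowledged --- gap. The parts you actually establish are sound: the reduction to finite $\f$-invariant quotients (valid for finitely generated $\G$, where the $\f$-orbit of a finite-index subgroup is finite), and the observation that every conjugacy class of a finite quotient $Q_n$ meeting the image of $C(\f)$ is $\psi_n$-invariant, so that by the finite twisted Burnside--Frobenius theorem combined with Brauer's permutation lemma $R(\psi_n)$ is at least the number of such classes. But the remaining step --- choosing the quotients so that this number of classes really tends to infinity, rather than the $n$ distinct fixed points fusing into boundedly many conjugacy classes of $Q_n$ --- is exactly the content of the lemma, and you dispose of it by writing that it ``is precisely what is carried out in \cite{Jabara}. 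Granting it\dots''. Granting that is granting the lemma; nothing in your text closes the gap.

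A second, independent problem is your restriction to finitely generated $\G$, described as ``the case relevant to this paper''. It is not: in the proof of Theorem \ref{teo:main_for_TBFT} the lemma is invoked (in contrapositive form) for $\f'$ acting on $\Sigma=\bigoplus_{x\in\Z^k}G_x$, which is not finitely generated. Finite generation is not a cosmetic hypothesis in your reduction --- without it there may be no $\f$-invariant finite-index subgroups separating the chosen fixed points --- and in fact the statement as printed fails for general residually finite groups. Take $V=\bigoplus_{i\ge 1}\Z_2$ with basis $(e_i)$ and $\f=\Id+f$, where $f(e_{2i})=e_i$ and $f(e_{2i-1})=0$: one checks that $\f$ is an automorphism, that $\Id-\f=f$ is surjective, so $R(\f)=1$, while $C(\f)=\Ker f$ contains all $e_{2i-1}$ and is infinite. (This $\f$ has infinite order, so it does not by itself invalidate the paper's application to finite-order automorphisms, but it shows that any correct proof must genuinely use finite generation or some substitute, and that your sketch, besides being incomplete at its central step, does not cover the case the paper actually needs.)
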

One can find in \cite{Jabara} an estimation for $|C(\f)|$, but we will not use it.

Passing to a semidirect product $\Sigma \rtimes_\a \Z^k $, we have by \cite{Curran2008} that a couple of automorphisms $\f':\Sigma\to\Sigma$
and $\overline{\f}: \Sigma \rtimes_\a \Z^k  /\Sigma \cong \Z^k \to \Z^k \cong \Sigma \rtimes_\a \Z^k  /\Sigma$ define an automorphism
$\f$ of $\Sigma \rtimes_\a \Z^k$ (not unique) if and only if
\begin{equation}\label{eq:maineq}
	\f'(\a(m)(h))=\a(\ov{\f}(g))(\f'(h)),\qquad h\in\Sigma,\quad m\in \Z^k.
\end{equation}
Since $\Sigma$ is abelian, by \cite[p.~207]{Curran2008} the mapping $\f_1$ defined as $\f'$ on $\Sigma$ and by $\overline{\f}$ on $\Z^k\subset \Sigma \rtimes \Z^k $ is still an automorphism.  Moreover,  from the following commutative diagrams
$$
\xymatrix{0 \ar[r]& \Sigma \ar[r] \ar[d]_{\f'}& \Sigma \rtimes \Z^k \ar[r] \ar@/_/[d]_{\f} \ar@/^/[d]^{\f_1} & \Z^k \ar[r] \ar[d]^{\overline{\f}}& 0\\
0 \ar[r]& \Sigma \ar[r] & \Sigma \rtimes \Z^k \ar[r] & \Z^k \ar[r] & 0}
$$
we have $R(\f)=R(\f_1)$. Indeed, if $R(\overline{\f})=\infty$ then $R(\f)=R(\f_1)=\infty$. If $R(\overline{\f})<\infty$ then $C_{\overline{\f}}=\{0\}$ 
and by Theorem \ref{teo:extensions}
$$
R(\f)=\sum_{\mbox{\scriptsize representatives }m\in \Z^k\mbox{\scriptsize of Reidemeister classes of }\overline{\f}}  R(\t_m \circ \f') =  R(\f_1).
$$ 
So, without loss of generality in the $R_\infty$ questions (not in Section \ref{sec:tbft}) we will assume
\begin{equation}\label{eq:restric_on_sub}
\Z^k \subset A\wr \Z^k \mbox{ is $\f$-invariant and } \f|_{\Z^k}=\overline{\f}.
\end{equation}
This was discussed briefly in \cite[Lemma 3.5]{gowon1} in a particular case.

\begin{lem}\label{lem:R_needed}
An authomorphism $\f: G\wr \Z^k \to G \wr \Z^k$ has $R(\f)<\infty$ if and only if 
$R(\overline{\f})< \infty$ and $R(\t_m \circ \f')<\infty$ for any $m \in \Z^k$ (in fact, it is sufficient to verify this for representatives
of Reidemeister classes of $\overline{\f}$).
\end{lem}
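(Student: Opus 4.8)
The plan is to derive the claim directly from Theorem~\ref{teo:extensions}, applied to the normal subgroup $\Sigma$ of $G\wr\Z^k$. First observe that $\Sigma$ is exactly the torsion subgroup of $G\wr\Z^k$ (any element with nonzero $\Z^k$-component has infinite order), hence $\Sigma$ is characteristic, in particular $\f$-invariant, so $\f'=\f|_\Sigma$ is well defined and the induced automorphism of $G\wr\Z^k/\Sigma\cong\Z^k$ is $\overline{\f}$ (the $\widetilde\f$ of Theorem~\ref{teo:extensions}). I will use two parts of that theorem: epimorphity, which gives $R(\overline{\f})\le R(\f)$, and the summation formula, which applies once $C(\overline{\f})=\{0\}$ and then reads
\[
R(\f)=\sum_{j=1}^{R(\overline{\f})}R(\t_{g_j}\circ\f'),
\]
with $p(g_1),\dots,p(g_{R(\overline{\f})})$ a full set of representatives of the Reidemeister classes of $\overline{\f}$. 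Since $\Sigma$ is abelian, conjugation on $\Sigma$ by $g=(h,m)\in\Sigma\rtimes\Z^k$ coincides with conjugation by $m$ (the factor $h\in\Sigma$ acts trivially), so $\t_{g_j}\circ\f'=\t_{m_j}\circ\f'$ with $m_j=p(g_j)\in\Z^k$, and the formula becomes $R(\f)=\sum_{j}R(\t_{m_j}\circ\f')$ with $m_j\in\Z^k$.

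The one point requiring attention is the implication $R(\overline{\f})<\infty\Rightarrow C(\overline{\f})=\{0\}$, needed before the summation formula can be invoked. Since $\overline{\f}\in\operatorname{GL}_k(\Z)$, the Reidemeister classes of $\overline{\f}$ are the cosets of $(\Id-\overline{\f})\Z^k$, so $R(\overline{\f})=[\Z^k:(\Id-\overline{\f})\Z^k]<\infty$ forces $\Id-\overline{\f}$ to be injective, i.e. $C(\overline{\f})=\Ker(\Id-\overline{\f})=\{0\}$ (alternatively, use Lemma~\ref{lem:Jab_fin_ord} together with torsion-freeness of $C(\overline{\f})$). Granting this, both directions are immediate. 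If $R(\f)<\infty$, then $R(\overline{\f})\le R(\f)<\infty$ by epimorphity, hence $C(\overline{\f})=\{0\}$, and the summation formula gives $R(\t_{m_j}\circ\f')\le R(\f)<\infty$ for each representative $m_j$. Conversely, if $R(\overline{\f})<\infty$ and $R(\t_{m_j}\circ\f')<\infty$ for each representative $m_j$, then $C(\overline{\f})=\{0\}$ and the summation formula presents $R(\f)$ as a finite sum of finite numbers.

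Finally, to pass from ``for all representatives'' to ``for all $m\in\Z^k$'' I will show that $R(\t_m\circ\f')$ depends only on the Reidemeister class of $m$ under $\overline{\f}$. Assuming \eqref{eq:restric_on_sub} (harmless here), for $g\in\Z^k$ a short computation in $G\wr\Z^k$ using $\f|_{\Z^k}=\overline{\f}$ gives $\t_g\circ(\t_m\circ\f)\circ\t_g^{-1}=\t_{m'}\circ\f$ with $m'=m+(\Id-\overline{\f})(g)$; restricting to $\Sigma$ yields $\a(g)\circ(\t_m\circ\f')\circ\a(g)^{-1}=\t_{m'}\circ\f'$. Conjugating an automorphism by an automorphism leaves the Reidemeister number unchanged, so $R(\t_{m'}\circ\f')=R(\t_m\circ\f')$, and as $g$ ranges over $\Z^k$ the element $m'$ ranges over the entire Reidemeister class of $m$; hence $R(\t_m\circ\f')$ is constant on Reidemeister classes of $\overline{\f}$. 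I expect no real obstacle: the only delicate steps are the identification $C(\overline{\f})=\{0\}$ (without which the summation formula of Theorem~\ref{teo:extensions} does not apply) and the routine reduction of the representatives $g_j$ to elements of $\Z^k$.
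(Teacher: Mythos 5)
Your proof is correct and follows essentially the same route as the paper: both directions come from Theorem~\ref{teo:extensions} applied to the extension $0\to\Sigma\to G\wr\Z^k\to\Z^k\to 0$, after observing that $R(\overline{\f})<\infty$ forces $C(\overline{\f})=\{0\}$ (you get this from the coset description of Reidemeister classes on $\Z^k$, the paper via Lemma~\ref{lem:Jab_fin_ord} plus the structure of $\Aut(\Z^k)$ --- both are fine). The only stylistic difference is that the paper obtains $R(\t_z\circ\f')<\infty$ for every $z$ by rerunning the argument with $\t_z\circ\f$ in place of $\f$, whereas you prove explicitly that $R(\t_m\circ\f')$ is constant on Reidemeister classes of $\overline{\f}$ via the conjugation identity $\a(g)\circ(\t_m\circ\f')\circ\a(g)^{-1}=\t_{m+(\Id-\overline{\f})(g)}\circ\f'$; this is a correct and slightly more informative justification of the parenthetical claim in the statement.
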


\begin{proof}
Suppose, $R(\f)<\infty$.
By Theorem \ref{teo:extensions}, we have $R(\overline{\f})<\infty$. Then by Lemma  \ref{lem:Jab_fin_ord}, we obtain
$|C(\overline{\f})|<\infty$ (in fact, $|C(\overline{\f})|=1$, because an automorphism of $\Z^k$ can not have finitely many fixed elements except of $0$). So, by Theorem \ref{teo:extensions}, $R(\f')<\infty$. Considering $\t_z \circ \f$, which has $R(\t_z \circ \f)=R(\f)<\infty$, instead of $\f$, we obtain in the same way that $R(\t_z \circ \f')<\infty$.

 Conversely, having  $|C(\overline{\f})|=1$, one can apply the summation formula from Theorem \ref{teo:extensions}.
\end{proof}

\begin{lem}\label{lem:how_to_define}
Suppose, $\overline{\f}:\Z^k \to \Z^k$ is an automorphism and $F:G\to G$ is an automorphism.
Then $\f'$ defined by
\begin{equation}\label{eq:how_to_def}
\f'(a_0)=(Fa)_0,\qquad \f'(a_x)=(Fa)_{\overline{\f}(x)}
\end{equation}
satisfies (\ref{eq:maineq}) and so defines an automorphism of $G\wr \Z^k$.

Evidently the subgroups $\oplus G_x$, where $x$ runs over an orbit of $\overline{\f}$, are $\f'$-invariant summands of $\Sigma$.
\end{lem}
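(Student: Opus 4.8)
The plan is to check directly that $\f'$ as defined by (\ref{eq:how_to_def}) is an automorphism of $\Sigma$ and that the pair $(\f',\overline{\f})$ satisfies the compatibility relation (\ref{eq:maineq}); once this is done, the assertion that $\f'$ extends to an automorphism of $G\wr\Z^k$ is immediate from \cite{Curran2008} — in fact one may take the automorphism $\f_1$ discussed above, acting by $\f'$ on $\Sigma$ and by $\overline{\f}$ on $\Z^k$.

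First I would note that (\ref{eq:how_to_def}) really does define a homomorphism. Since $\Sigma=\bigoplus_{x\in\Z^k}G_x$ is a direct sum, a homomorphism out of it is determined by, and may be prescribed freely on, each summand $G_x$; and $a_x\mapsto(Fa)_{\overline{\f}(x)}$ is just the composite of the canonical isomorphism $G_x\cong G$, the automorphism $F$, and the canonical isomorphism $G\cong G_{\overline{\f}(x)}$. In particular the two displayed formulas in (\ref{eq:how_to_def}) express a single rule (the first being the case $x=0$, since $\overline{\f}(0)=0$). As $\overline{\f}$ is a bijection of $\Z^k$ and $F$ an automorphism of $G$, the map $a_x\mapsto(F^{-1}a)_{\overline{\f}^{-1}(x)}$ is a two-sided inverse, so $\f'\in\Aut(\Sigma)$.

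Next I would verify (\ref{eq:maineq}). Both sides are homomorphisms in $h$, so it suffices to test $h=a_x$. Using $\a(m)(a_x)=a_{m+x}$ together with the additivity of $\overline{\f}$ (every group automorphism of $\Z^k$ is $\Z$-linear), the left side equals $\f'(a_{m+x})=(Fa)_{\overline{\f}(m+x)}=(Fa)_{\overline{\f}(m)+\overline{\f}(x)}$, while the right side equals $\a(\overline{\f}(m))\bigl((Fa)_{\overline{\f}(x)}\bigr)=(Fa)_{\overline{\f}(m)+\overline{\f}(x)}$; these coincide. This one-line computation is the entire technical content, and there is no real obstacle — the point of the lemma is to record a convenient way of producing automorphisms with prescribed behaviour on the base group and on $\Z^k$.

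Finally, for the last clause: if $O\subseteq\Z^k$ is an orbit of $\overline{\f}$, then $x\in O$ implies $\overline{\f}(x)\in O$, so $\f'$ sends $\bigoplus_{x\in O}G_x$ into itself; since $\overline{\f}$ restricts to a bijection of $O$ and $F$ is invertible, this restriction is again an automorphism. Partitioning $\Z^k$ into $\overline{\f}$-orbits then yields $\Sigma=\bigoplus_{O}\bigl(\bigoplus_{x\in O}G_x\bigr)$ as a direct sum of $\f'$-invariant subgroups, which is exactly the structural decomposition exploited in the sequel.
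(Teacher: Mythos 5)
Your proof is correct and follows essentially the same route as the paper: both reduce the verification of (\ref{eq:maineq}) to generators $a_x$ and carry out the one-line computation $\f'(\a(m)a_x)=(Fa)_{\overline{\f}(m)+\overline{\f}(x)}=\a(\overline{\f}(m))\f'(a_x)$ using the linearity of $\overline{\f}$, and both note that the first formula in (\ref{eq:how_to_def}) is the special case $x=0$ of the second. The extra details you supply (that $\f'$ is a bijection of $\Sigma$ with explicit inverse, and the orbit decomposition) are treated as evident in the paper and are correctly handled.
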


\begin{proof}
It is sufficient to prove (\ref{eq:maineq}) on generating elements of the form $a_x$. Then for any $z\in \Z^k$,
$$
\f'(\a(z) a_x)=\f'(a_{x+z})= (Fa)_{\overline{\f}(x+z)}=\a(\overline{\f}(z)) (Fa)_{\overline{\f}(z)}=\a(\overline{\f'}(z)) \f'(a_x)
$$
and  (\ref{eq:maineq})  is fulfilled. The first equality in (\ref{eq:how_to_def}) is in fact a particular case of the second one.
\end{proof}

It is not difficult to prove (see \cite{FelshtynHill1993CM}) that,
for $\overline{\f}:\Z^k\to \Z^k$ defined by a matrix $M$, one has
\begin{equation}\label{eq:FHZk}
R(\overline{\f})=\# \Coker (\Id -\overline{\f})=|\det(E-M)|,
\end{equation}
if $R(\overline{\f})<\infty$, and $|\det(E-M)|=0$ otherwise.

\section{Some classes of wreath products without $R_\infty$ property}

\begin{teo}\label{teo:cases}
Suppose, the prime-power decomposition of $G$ is $\oplus_i (\Z_{(p_i)^{r_i}})^{d_i}$. Then under each of the following 
conditions the corresponding wreath products $G\wr \Z^k$ admit an automorphism $\f$ with $R(\f)<\infty$, i.e. do not have the property $R_\infty$:
\begin{description}
\item[Case 1)] for all $p_i=2$ and $p_i=3$, we have  $d_i\ge 2$ (and is arbitrary for primes $>3$);
\item[Case 2)] there is no $p_i=2$ and $k$ is even;
\item[Case 3)] for all $p_i=2$, we have  $d_i\ge 2$  and $k=4s$ for some $s$.
\end{description}
\end{teo}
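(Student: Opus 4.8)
The plan is to reduce, via Lemma~\ref{lem:R_needed}, the problem of finding an automorphism $\f$ of $G\wr\Z^k$ with $R(\f)<\infty$ to two separate tasks: choosing an automorphism $\ov\f\colon\Z^k\to\Z^k$ with $R(\ov\f)<\infty$, and ensuring that the induced twisted automorphisms $\t_m\circ\f'$ of $\Sigma=\oplus_{x\in\Z^k}G_x$ all have finite Reidemeister number. For the second task I would build $\f'$ using the recipe of Lemma~\ref{lem:how_to_define}: pick an automorphism $F\colon G\to G$ and set $\f'(a_x)=(Fa)_{\ov\f(x)}$. The key structural observation is then that $\Sigma$ decomposes as a direct sum of $\f'$-invariant blocks, one for each $\ov\f$-orbit on $\Z^k$; on a finite orbit of length $\ell$ such a block is $G^{\ell}$ with the map $(b_0,\dots,b_{\ell-1})\mapsto (Fb_{\ell-1},Fb_0,\dots,Fb_{\ell-2})$, i.e. a ``twisted shift'' whose relevant fixed-point data is governed by $\Id-F^{\ell}$ (or $\Id-\t\circ F^{\ell}$ after incorporating a $\t_m$-twist), while on an infinite orbit the block behaves like the $k=1$ lamplighter situation already understood. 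Thus finiteness of $R(\t_m\circ\f')$ will come down to invertibility (over the relevant finite ring) of operators of the form $\Id-(\text{twist})\circ F^{\ell}$ for the finitely many orbit lengths $\ell$ that occur, together with control of the infinite orbits; and the contribution of the $\Z^k$-quotient is pinned down by the formula $R(\ov\f)=|\det(E-M)|$.

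Concretely I expect the construction to take the following shape. Write $G=\oplus_i(\Z_{p_i^{r_i}})^{d_i}$ and treat each prime-power component separately, since an automorphism of $G$ is a product of automorphisms of the $p$-primary parts. For a component $(\Z_{p^r})^d$ I would choose $\ov\f$ given by an integer matrix $M\in GL_k(\Z)$ and $F\in GL_d(\Z_{p^r})$ so that, for every orbit length $\ell$ arising from $M$ acting on $\Z^k$, the matrix $E-M$ has determinant coprime to $p$ (guaranteeing $R(\ov\f)<\infty$ and the $\Z^k$-level twists behave) and simultaneously $\det(\Id - c\, F^{\ell})$ is a unit in $\Z_{p^r}$ for all the ``twist scalars'' $c$ that can appear. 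The three cases of the theorem correspond to three ways of finding such $M$ and $F$:

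\emph{Case 1)} ($d_i\ge 2$ for $p_i\in\{2,3\}$): here one has enough room inside $GL_{d_i}(\Z_{p_i^{r_i}})$ (which for $d\ge2$ contains elements of many orders, e.g.\ a companion matrix of an irreducible polynomial) to pick $F$ on each small-prime component avoiding the finitely many ``bad'' eigenvalue conditions; on components with $p_i>3$ the cyclic group $(\Z/p_i^{r_i})^\times$ already has order divisible by $p_i-1\ge 4$, which is enough even for $d_i=1$. One can take $M$ to be, say, a fixed hyperbolic matrix (or even a suitable permutation-type matrix) whose orbit lengths and whose $\det(E-M^{\ell})$ are known and mild.

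\emph{Case 2)} (no $p_i=2$, $k$ even): now every $p_i$ is odd, so $-1$ is a nontrivial unit and one can afford $F=-\Id$ (or a scalar) on every component; the requirement ``$k$ even'' is used to choose $M\in GL_k(\Z)$ with the right orbit structure, e.g.\ a block-diagonal matrix built from $\Mat{0}{-1}{1}{0}$-type $2\times2$ blocks (which has determinant $1$, acts freely on $\Z^2\setminus\{0\}$ with orbits of length $4$, and has $\det(E-M)=1$), so that all orbit lengths are divisible by $4$ and $\det(\Id-c\,F^{4})=\det(\Id-c\,\Id)$ can be made a unit for the finitely many scalars $c$ that occur modulo odd $p_i^{r_i}$.

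\emph{Case 3)} ($d_i\ge2$ for $p_i=2$, $k=4s$): a hybrid. On odd components proceed as in Case~2; on the $2$-primary components use the $d_i\ge2$ freedom as in Case~1 to pick $F$ of suitable order in $GL_{d_i}(\Z_{2^{r_i}})$, and exploit $k=4s$ to take $M$ a direct sum of $s$ copies of a chosen $4\times4$ block with controlled orbit lengths so that $F^{\ell}$ lands in a part of $GL_{d_i}(\Z_{2^{r_i}})$ where $\Id - (\text{twist})\,F^{\ell}$ is invertible.

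The main obstacle, and the part requiring genuine care rather than bookkeeping, is the interaction between the twist $\t_m$ (for $m$ ranging over representatives of Reidemeister classes of $\ov\f$) and the block decomposition of $\f'$: one must check that after conjugating $\f'$ by $\t_m$ the induced twisted automorphism of each orbit-block is still of the controlled ``twisted shift'' form, compute exactly which scalar/monomial twist $c$ enters each block as a function of $m$, and verify that only finitely many such $c$ occur and that the invertibility conditions $\det(\Id-c\,F^{\ell})\in(\Z_{p^r})^\times$ can be met simultaneously for all of them by a single choice of $F$. Equivalently, one needs a clean formula for $R(\t_m\circ\f')$ on a finite orbit --- it should be (a power of) $\bigl|\det(\Id-c\,F^{\ell})\bigr|$, finite iff this is nonzero mod $p$ --- and a verification that the infinitely many infinite $\ov\f$-orbits contribute no twisted conjugacy classes obstruction, which is where the $k=1$ computations of \cite{gowon1} (and the coprimality $\gcd(\det(E-M),|G|)=1$) get reused. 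Once these $R(\t_m\circ\f')<\infty$ are in hand for all the finitely many relevant $m$, Lemma~\ref{lem:R_needed} assembles them with $R(\ov\f)<\infty$ into $R(\f)<\infty$, completing the proof in each of the three cases; and since each $\ov\f$ and $F$ is chosen of finite order, the resulting $\f$ has finite order, as claimed in the introduction.
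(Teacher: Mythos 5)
Your general framework is the same as the paper's: reduce via Lemma~\ref{lem:R_needed}, build $\f'$ by Lemma~\ref{lem:how_to_define}, decompose $\Sigma$ into orbit blocks, and reduce finiteness of $R(\t_m\circ\f')$ to invertibility of $\Id-(\mbox{twist})\circ F^{\ell}$ over $\Z_{p^r}$. But two of your concrete choices fail, and the whole difficulty of the theorem lies in making these choices correctly. In Case~2 you take $M$ of order $4$ (all nonzero orbits of length $4$) and $F$ a scalar $m$, so the condition on a generic block is that $1-m^{4}$ be a unit in $\Z_{p^{r}}$, i.e.\ $m^{4}\not\equiv 1 \bmod p$. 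For $p=3$ and $p=5$ we have $(p-1)\mid 4$, so $m^{4}\equiv 1 \bmod p$ for \emph{every} unit $m$; since Case~2 allows $p_i\in\{3,5\}$ with $d_i=1$, no scalar $F$ can work there, and your specific suggestion $F=-\Id$ fails for every odd $p$ (as $(-1)^4=1$). This is exactly why the paper instead uses the order-$3$ matrix $M=\left(\begin{smallmatrix}0&1\\-1&-1\end{smallmatrix}\right)$, giving orbit length $3$: the condition $m^{3}\not\equiv 1\bmod p$ is satisfiable for every odd $p$ (take $m=2$, or $m=3$ when $p=7$), and the identity $M^{2}+M+E=0$ is what keeps the $\t_z$-twisted blocks in the same cyclic form.

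The second gap is your suggestion of a hyperbolic $M$ in Case~1 and the claim that infinite orbits are harmless. On an infinite orbit the block is a restricted sum $\oplus_{j\in\Z}G$ with a twisted shift, and $\Coker(\Id-S\circ F)\cong G\neq 0$ (the twisted augmentation $\sum_j F^{-j}b_j$ survives), so a single infinite orbit already forces $R(\f')>1$ and infinitely many of them force $R(\f')=\infty$; there is no $k=1$ precedent for this, since \cite{gowon1} also uses the finite-order map $-\Id$. So $\ov\f$ must have finite order, which is what all three constructions in the paper arrange ($-\Id$ in Case~1, order~$3$ in Case~2, the order-$5$ companion matrix $M_4$ of the fifth cyclotomic polynomial in Case~3). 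Finally, in Cases~1 and~3 you defer the existence of a suitable $F\in GL_{d}(\Z_{2^{r}})$ to ``enough room''; the paper must (and does) exhibit explicit companion-type blocks $F_2,F_3$ and check by hand that $\det(F^{\ell}-E)$ is odd for the relevant $\ell$ ($\ell=2$ in Case~1, $\ell=5$ in Case~3, where the key point is that $5$ is coprime to the orders of $F_2$ and $F_3$ modulo $2$). Without these verifications the argument is not complete.
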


\begin{proof}
In each of these cases we will take an automorphism $\overline{\f}:\Z^k \to \Z^k$ with $R(\overline{\f})<\infty$ (in fact, of finite order)
and define $\f':\Sigma\to \Sigma$ with appropriate properties in accordance with Lemmas \ref{lem:R_needed} and \ref{lem:how_to_define}.

\textbf{Case 1).} In this case we can take $\overline{\f}=-\Id: \Z^k \to \Z^k$ and construct $\f$ similarly to \cite{gowon1}. More specifically, note that $R(\overline{\f})=2^k$
and
define $\f':\Sigma \to \Sigma$ in the following way.
The subgroups $G_x \oplus G_{-x}$ will be invariant subgroups of $\f'$ and we define
$$
\f': G_x \oplus G_{-x} \to G_x \oplus G_{-x}\mbox{ as } 
\begin{pmatrix}
0 & \Psi\\
\Psi & 0
\end{pmatrix},
$$
where $\Psi:G\to G$ is defined as a direct sum of blocks of the following types:
\begin{equation}\label{eq:F2F3}
F_2 = \begin{pmatrix}
			0 & 1\\
			1 & 1
			\end{pmatrix} : (\Z_q)^2 \to (\Z_q)^2, 
		\quad
		F_3 = \begin{pmatrix}
			0 & 0 & 1 \\
			0 & 1 & 1 \\
			1 & 1 & 1
		\end{pmatrix} : (\Z_q)^3 \to (\Z_q)^3, 
\end{equation}
where  $q$ are some $(p_i)^{r_i}$ and for each summand $\left(\Z_{(p_i)^{r_i}}\right)^{d_i}$ of $G$ ($d_i \ge 2$, $p_i=2$ or $p_i=3$) we have $s$ summands $F_2$, if $d_i=2s$, or $s-1$ summands $F_2$ and one summand $F_3$, if  $d_i=2s+1$.
For the remaining summands (i.e. for $p_i>3$) we do not need to group summands in the above way and we can consider $F_1:\Z_q\to \Z_q$,
$1 \mapsto m(q)$
 where $q=(p_i)^{r_i}$. This $m=m(q)$ should be taken in such a way that
\begin{equation}\label{eq:condit_on_m}
m^2 \mbox{ and } 1-m^2 \mbox{  are invertible in }\Z_q.
\end{equation}
This can be done for $p_i>3$: one can take $m=2$ (and impossible for $p_i=2$ or $3$). 

By Lemma \ref{lem:how_to_define}, we defined an automorphism $\f$ of $G\wr \Z^k$ in this way 
(one may assume (\ref{eq:restric_on_sub}) to have a unique $\f$).

We claim that $R(\t_{z_i} \circ \f')=R(\a(z_i) \circ \f')=1$, $i=1,\dots, 2^k$. Consequently, by Theorem \ref{teo:extensions}, $R(\f)=R(\overline{\f})=2^k<\infty$.
So we need to prove  that $\Id_\Sigma-\a(z_i) \circ \f'$ is an epimorphism, because, for Abelian groups, this is evidently the same
as $R(\a(z_i) \circ \f')=1$.   
This homomorphism has a decomposition of 
$\Sigma$ into invariant subgroups $G_{x}\oplus G_{-x +z_i}$,
because $\a(z_i): G_{-x} \to G_{-x +z_i}$, $\f': G_{-x +z_i} \to G_{x-z_i}$  and $\a(z_i): G_{x-z_i} \to G_{x}$. 
Note that the subgroups $G_{x}$ and $G_{-x +z_i}$ coincide if $z_i=2x$ (this corresponds to the case of $G_0$ for $\f'$). Thus it is sufficient to verify the epimorphity for each $G_{x}\oplus G_{-x +z_i}$ and for the exceptional case. Passing to summands of $G$, it is sufficient to verify the epimorphity of 
$$
\begin{pmatrix}
-E & F_2\\
F_2& -E
\end{pmatrix}, \quad
\begin{pmatrix}
-E & F_3\\
F_3& -E
\end{pmatrix}
\mbox{ and } 
\begin{pmatrix}
-E & F_1\\
F_1& -E
\end{pmatrix} = \begin{pmatrix}
-1 & m\\
m & -1
\end{pmatrix}.
$$   
The first two are isomorphisms with the explicit inverses 
$$
\left(
\begin{array}{cccc}
 -1 & 1 & 1 & 0 \\
 1 & 0 & 0 & 1 \\
 1 & 0 & -1 & 1 \\
 0 & 1 & 1 & 0 \\
\end{array}
\right), \qquad
\left(
\begin{array}{cccccc}
 -2 & 0 & 1 & 1 & 1 & -1 \\
 0 & -1 & 1 & 1 & 0 & 0 \\
 1 & 1 & -1 & -1 & 0 & 1 \\
 1 & 1 & -1 & -2 & 0 & 1 \\
 1 & 0 & 0 & 0 & -1 & 1 \\
 -1 & 0 & 1 & 1 & 1 & -1 \\
\end{array}
\right).
$$
For the  third one the invertibility follows from (\ref{eq:condit_on_m}).
For the exceptional case we formally do not need to verify the epimorphity, because it can add only a finite number to $R(\f')$,
but we wish to prove our (more strong) claim (this will be helpful for TBFT). So we have to prove, that
$$
F_2-E, \qquad F_3-E, \qquad m-1
$$
are epimorphisms. This can be done immediately: $\det(F_2-E)=1 \mod 2$,  $\det(F_3-E)=1 \mod 2$, and $1-m^2 = (1-m)(1+m)$.

\textbf{Case 2).} Now consider the case of even $k=2 t$ and $G$ without $2$-subgroup.
In this case the construction starts as in \cite{TroLamp}: 
we take $\overline{\f}:\Z^{2t} \to \Z^{2t}$ to be the direct sum of $t$ copies of
$$
\Z^2 \to \Z^2,\quad \begin{pmatrix}
u\\ v
\end{pmatrix} \mapsto M \begin{pmatrix}
u\\ v
\end{pmatrix}, \qquad M=\begin{pmatrix}
0 & 1\\
-1 & -1 
\end{pmatrix}. 
$$ 
Then $M$ generates a subgroup of $GL(2,\Z)$, which is isomorphic to $\Z_3$ (see, \cite[p. 179]{Newman1972book}).
All orbits of $M$ have length $3$ (except of the trivial one) and the corresponding Reidemeister number $=\det (E-M)=3$.
Similarly for $\overline{\f}$: the length of any orbit is $3$ (except of the zero) and $R(\overline{\f})=3^t$. 
Also
\begin{equation}\label{eq:sum_deg_M}
M^2 + M +E= \begin{pmatrix}
-1 & -1\\
1 & 0 
\end{pmatrix} + \begin{pmatrix}
0 & 1\\
-1 & -1 
\end{pmatrix}+ \begin{pmatrix}
1 & 0\\
0 & 1 
\end{pmatrix}= \begin{pmatrix}
0 & 0\\
0 & 0 
\end{pmatrix}.
\end{equation}

Now define $\f'$ as a direct sum of  actions for $\Z_q$, $q=(p_i)^{r_i}$, $p_i \ge 3$.

For $p_i \ge 3$ choose $m=m_i$ such that   
\begin{equation}\label{eq:condit_on_m_3}
m^3 \mbox{ and } 1-m^3 \mbox{  are invertible in }\Z_q.
\end{equation}
This can be done for $p_i\ge 3$: one can take $m=3$ for $p_i=7$ and $m=2$ in the remaining cases (and impossible for $p_i=2$).  
Define $\f'(a_0)=(m a)_0$ and 
$\f'(a_x)=(m a)_{\overline{\f}(g)}$, where $a \in \Z_q \subset G$. So, the corresponding subgroup $\oplus_{g \in \Z^k}(\Z_q)_g \subset \Sigma$ is $\f'$-invariant and decomposed into infinitely many invariant summands 
$(\Z_q)_g \oplus (\Z_q)_{\overline{\f}(g)}\oplus (\Z_q)_{\overline{\f}^2(g)}$
isomorphic to $(\Z_q)^3$ (over generic orbits of $\overline{\f}$) and one summand $(\Z_q)_0$ (over the trivial orbit).
Then the corresponding restrictions of $\f'$ and   $1-\f'$ can be written as multiplication by 
$$
\begin{pmatrix}
0  &  0 & m \\
m & 0 & 0\\
0 & m & 0
\end{pmatrix}, \quad
\begin{pmatrix}
1  &  0 & -m \\
-m & 1 & 0\\
0 & -m & 1
\end{pmatrix}, \quad \mbox{and }
m, \quad 1-m,
$$
respectively. The three-dimensional mappings are isomorphisms by (\ref{eq:condit_on_m_3}).  Since an element $\ell$ is not invertible in $\Z_{(p_i)^{r_i}}$ if and only if $\ell = u\cdot p_i$, the invertibility of one-dimensional mappings follows from 
(\ref{eq:condit_on_m_3}) and the factorization $1-m^3=(1-m)(1+m+m^2)$. (This construction gives a more explicit presentation of a part of proof 
of \cite[Theorem 4.1]{TroLamp})

For $\t_z \circ \f'$ we have
$$
(\t_z \circ \f') (g_x) = (m g)_{\overline{\f}(x)+z}, \quad
(\t_z \circ \f') (g_{\overline{\f}(x)+z})=(mg)_{\overline{\f}^2(x)+\overline{\f}z+z},
$$
$$
(\t_z \circ \f')g_{\overline{\f}^2(x)+\overline{\f}z+z}=(mg)_{\overline{\f}^3(x)+\overline{\f}^2z+\overline{\f}z+z}=(mg)_x,
$$
because  $\overline{\f}^3(x)=x$  and $\overline{\f}^2z+\overline{\f}z+z=0$  
by (\ref{eq:sum_deg_M}). So $\t_z \circ \f'$ has the same matrices as $\f'$, but on new invariant summands
$(\Z_q)_x \oplus (\Z_q)_{\overline{\f}(x)+z}\oplus (\Z_q)_{\overline{\f}^2(x)+\overline{\f}z+z}$. 
Similarly for the exceptional orbit.
This completes the proof of this case.

\textbf{Case 3):} when  $d_i >1$ for $p_i=2$ and $k = 4s $ .
Using the cyclotomic polynomial we can define (similarly to the above $M$) an element of order 5 in $GL(4,\Z)$ 
$$
M_4=
\begin{pmatrix}
0 & 0 & 0 & -1 \\
1 & 0  &  0 & -1 \\
0 & 1 &   0 & -1\\
0& 0&  1 & -1
\end{pmatrix}
$$
 (see e.g.  \cite{KuzmPavl2002} for an elementary introduction). 
For any  $k= 4s$, let $M\in GL(k,\Z)$ be the direct sum of  $s$ copies of $M_4$.
Let $\overline{\f}:\Z^k \to \Z^k$ be defined by $M$.  One can calculate
$$
\det(M_4-E)=5, \qquad \det(M-E)=5^s.
$$
Hence, by (\ref{eq:FHZk}),  $R(\overline{\f})=5^s <\infty$.
The length of any non-trivial orbit is $5$,  hence an \emph{odd} number. 

Similarly to $M$, one can verify that
\begin{equation}\label{eq:4M40}
(M_4)^4 +(M_4)^3+ (M_4)^2+ M_4 + E=0.
\end{equation}
This can be also deduced from the fact that the characteristic polynomial of  the ``companion matrix'' of a polynomial  $p$ is just $p$.

For $p$-power components $\Z_{p^i}$ with $p>2$, we define
$\f'$ (as above)  by $a_0 \mapsto (p-1) a_0$.  Then, for an orbit $u, \overline{\f} u, \dots, \overline{\f}^\gamma u$, we need to verify (for finiteness of $R(\f')$) that $(p-1)^\gamma$ as a homomorphism    $\Z_{p^i} \to \Z_{p^i}$ has no non-trivial fixed elements, i.e. $ (p-1)^\gamma \not\equiv 1 \mod p$. This is fulfilled because, for an odd $\gamma$, $(p-1)^\gamma -1 \equiv -2  \not\equiv 0 \mod p$.

For $2$-power components $\Z_{2^i}\oplus \Z_{2^i}$, we define
$\f'$  by $a_0 \mapsto F_2 a_0$ (as in (\ref{eq:F2F3})).  Then, for an orbit $u, \overline{\f} u, \dots, \overline{\f}^\gamma u$, we need to verify that $(F_2)^\gamma$ as a homomorphism   $\Z_{2^i}\oplus \Z_{2^i}\to \Z_{2^i}\oplus \Z_{2^i}$ has no non-trivial fixed elements.
Here we need to use not only the fact that $\gamma$ is odd, but its more specific form: $\g=5$.
In particular it can not be divided by $3=$order of $F_2$ $\mod 2$. Hence $(F_2)^\gamma=(F_2)^5=(F_2)^2=
\begin{pmatrix}
1& 1\\
1 & 0
\end{pmatrix}
\mod 2$.
 It has no non-trivial fixed elements $\mod 2^i$ for any $i$.

For $2$-power components $\Z_{2^i}\oplus \Z_{2^i}\oplus \Z_{2^i}$, we define
$\f'$  by $a_0 \mapsto F_3 a_0$ (as in (\ref{eq:F2F3})).  Then, for an orbit of $\overline{\f}$ of length $\gamma$, we need to verify that $(F_3)^\gamma$ as a homomorphism   $\Z_{2^i}\oplus \Z_{2^i}\oplus \Z_{2^i}\to \Z_{2^i}\oplus \Z_{2^i}\oplus \Z_{2^i}$ has no non-trivial fixed elements. One can verify, for $i=1$, i.e. for $2^i=2$, that the order of $F_3$ is relatively prime with $5$, namely it is equal to $7$. Moreover, $(F_3)^j$, $j=1,\dots,6$, has  no non-trivial fixed elements.  
The absence of non-trivial fixed elements is equivalent to $\det((F_3)^j-E) \not\equiv 0 \mod 2$. Then
$\det((F_3)^j-E) \not\equiv 0 \mod 2^i$. Hence, for $i>1$ these automorphisms still have no non-trivial fixed elements.
The elements $(F_3)^{7u}$, $u=1,2,\dots$, typically are not $E$ $\mod 2^i$, but in any case $7u \ne 5$, for any $u$. In fact, we are interested only in properties of $(F_3)^5$. 

Collecting together these homomorphisms defined on the summands, we obtain as in the first two cases, $\f'$ with the desired properties. It remains only to verify the epimorphity of $\t_z\circ \f'$. This can be done quite similarly to the end of Case 2) with the help of
(\ref{eq:4M40}).
\end{proof}

\section{Twisted Burnside-Frobenius Theorem}\label{sec:tbft}
\begin{teo}\label{teo:main_for_TBFT}
Suppose that $\f$ is an automorphism of the restricted wreath product $G\wr \Z^k=\oplus_{m\in \Z^k} G_m \rtimes_\a \Z^k$, where $G$ is a finite abelian group.
Suppose that $\f$ is of finite order. Then $R(\f')$ is $1$ or $\infty$.
\end{teo}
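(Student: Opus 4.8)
The plan is to work entirely inside the abelian group $\Sigma=\oplus_{m\in\Z^k}G_m$ and with the endomorphism $\Id-\f'$. In an abelian group two elements are $\f'$-conjugate exactly when they differ by an element of $\Im(\Id-\f')$, so $R(\f')=|\Coker(\Id-\f')|$; and since $\f'$ preserves each of the (finitely many) primary components $\Sigma_{(p)}$, this splits as $R(\f')=\prod_p|\Coker(\Id-\f'|_{\Sigma_{(p)}})|$. Hence it is enough to prove that every local factor $\Coker(\Id-\f'|_{\Sigma_{(p)}})$ is trivial or infinite: if they are all trivial then $\Id-\f'$ is onto and $R(\f')=1$, and if one is infinite then $R(\f')=\infty$. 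I will use repeatedly that, by (\ref{eq:maineq}), $\f'$ is semilinear over $\overline{\f}$, i.e. $\f'(\a(m)h)=\a(\overline{\f}(m))\f'(h)$, and that both $\overline{\f}\in GL(k,\Z)$ and $\f'$ have finite order; put $d=\operatorname{ord}(\overline{\f})$ and $n=\operatorname{ord}(\f')$.

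The key step is an ``all or nothing'' lemma for fixed points, which I would prove first: \emph{for any finite abelian group $H$ and any finite-order $\overline{\psi}$-semilinear automorphism $\psi$ of $\oplus_{m\in\Z^k}H_m$, the fixed subgroup $C(\psi)$ is trivial or infinite.} Given $0\ne s\in C(\psi)$ with finite support $S\subset\Z^k$, set $t_v=\sum_{j=0}^{d-1}\a(\overline{\psi}^{\,j}v)(s)$ for $v\in\Z^k$; using $\psi(s)=s$, semilinearity and $\overline{\psi}^{\,d}=\Id$, a one-line reindexing shows $\psi(t_v)=t_v$, so $t_v\in C(\psi)$. Now choose $w\in\Z^k$ lying in none of the finitely many proper subspaces $\{x\in\R^k:\overline{\psi}^{\,l}x=x\}$, $1\le l\le d-1$ (possible, a lattice being no finite union of proper subspaces); then $\overline{\psi}^{\,0}w,\dots,\overline{\psi}^{\,d-1}w$ are distinct, so for $i\gg 0$ the translated supports $i\,\overline{\psi}^{\,j}w+S$, $0\le j<d$, are pairwise disjoint, whence the $t_{iw}$ are nonzero with pairwise distinct supports. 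So $C(\psi)$ is infinite.

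With the lemma I would handle the local factors by cases. For $p\nmid n$: $n$ is invertible on $\Sigma_{(p)}$, so $e=n^{-1}\sum_{i=0}^{n-1}(\f')^i$ is an idempotent projecting onto $C(\f'|_{\Sigma_{(p)}})$, on which $\f'=\Id$; writing $1+x+\dots+x^{n-1}=(x-1)q(x)+n$ and evaluating at $\f'$ gives $(\Id-\f')q(\f')=n(1-e)$, which is invertible on $(1-e)\Sigma_{(p)}$, so $\Coker(\Id-\f'|_{\Sigma_{(p)}})\cong C(\f'|_{\Sigma_{(p)}})$ and the lemma applies. For $p\mid n$: reduce mod $p$ to the $\mathbb{F}_p$-vector space $V=\Sigma_{(p)}/p\Sigma_{(p)}\cong\oplus_m(G_{(p)}/pG_{(p)})_m$ with its induced finite-order semilinear automorphism $\psi$. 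Viewing $V$ as an $\mathbb{F}_p[x]$-module via $x=\psi$ and using $x^{n'}-1=(x-1)^{p^{c}}g(x)^{p^{c}}$ over $\mathbb{F}_p$ (where $n'=\operatorname{ord}(\psi)=p^{c}m'$, $p\nmid m'$, $g=\prod_{e\mid m',\,e>1}\Phi_e$ coprime to $x-1$), the Chinese remainder theorem gives $V=V'\oplus V''$ with $\Id-\psi$ bijective on $V''$ and nilpotent of exponent $\le p^{c}$ on $V'$. Since $C(\psi)=\Ker(\psi-\Id)\subseteq V'$ is nonzero whenever $V'\ne 0$, the lemma forces $V'=0$ or $V'$ infinite; in the latter case, filtering $V'$ by the powers of $\Id-\psi$ — on whose successive quotients $\Id-\psi$ acts by surjections, so the top quotient is the largest among the finitely many — shows $\Coker(\Id-\psi|_{V'})$ is infinite. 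Thus $\Coker(\Id-\psi|_V)$ is trivial or infinite; when trivial, $\Id-\f'$ is onto $\Sigma_{(p)}$ by a Nakayama-type iteration (legitimate since $\Sigma_{(p)}$ has bounded exponent), and when infinite, $\Coker(\Id-\f'|_{\Sigma_{(p)}})$ surjects onto it. Either way the local factor is trivial or infinite, which completes the argument.

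I expect the fixed-point lemma to be the only real obstacle; the rest is routine semilinear algebra over $\mathbb{F}_p$ together with the bounded-exponent Nakayama step. It is also where the hypotheses are genuinely used: finiteness of $\operatorname{ord}(\overline{\f})$ makes all $\overline{\f}$-orbits on $\Z^k$ finite, so the symmetrizations $t_v$ remain finitely supported, and the \emph{restricted} direct sum is what lets one slide $t_v$ off to infinity along a generic ray and so manufacture genuinely new fixed elements. (One could equally first invoke residual finiteness of $\Sigma$ and Lemma~\ref{lem:Jab_fin_ord} to get $|C(\f')|<\infty$, then the lemma to get $C(\f')=\{0\}$, and only then run the local analysis; this shortens the bookkeeping but relies on the same lemma.)
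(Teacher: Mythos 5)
Your argument is correct, and its engine --- producing infinitely many fixed elements of $\f'$ by translating one nonzero fixed element along a generic $\overline{\f}$-orbit and summing, which works precisely because the direct sum is restricted and $\overline{\f}$ has finite order --- is exactly the engine of the paper's proof. Everything around that engine is organized differently. The paper passes from $R(\f')>1$ to a nonzero fixed point in two lines: choose $\sigma\notin\Im(\Id-\f')$ and observe that the subgroup $\Sigma_\sigma$ generated by the finite $\f'$-orbit of $\sigma$ is a finite $\f'$-invariant group on which $\Id-\f'$ is still non-surjective, hence non-injective; and it passes from ``$C(\f')$ infinite'' back to $R(\f')=\infty$ by quoting Lemma~\ref{lem:Jab_fin_ord}, $\Sigma$ being residually finite. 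You replace both reductions by a direct computation of $\Coker(\Id-\f')$ primary component by primary component: the averaging idempotent when $p$ does not divide the order of $\f'$, and the $\mathbb{F}_p[x]$-decomposition, the filtration by powers of $\Id-\psi$, and the bounded-exponent Nakayama step when it does. I checked these steps and they are sound; your route is longer but self-contained, needing neither Jabara's lemma nor residual finiteness, and it yields the slightly more precise statement that each local cokernel is trivial or infinite. What it misses is the cheap observation that non-surjectivity of $\Id-\f'$ can be tested on a finite $\f'$-invariant subgroup, where it is equivalent to non-injectivity --- that single remark makes your entire local analysis unnecessary. Two small points to tighten: as in the paper, the indices $i$ in your family $t_{iw}$ should be taken along a sufficiently sparse sequence so that the supports are genuinely pairwise distinct; and your closing parenthetical is not really an alternative, since $C(\f')=\{0\}$ alone does not give $R(\f')=1$ without some form of the local (or finite-subgroup) analysis.
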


\begin{cor}\label{cor:main_TBFT}
	In particular, $\f$ has the TBFT$_f$ property. 
\end{cor}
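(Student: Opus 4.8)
The plan is to deduce the corollary from Theorem~\ref{teo:main_for_TBFT} together with the Mackey ``little--group'' description of the finite--dimensional part of the unitary dual. Recall that TBFT$_f$ asserts the equality of $R(\f)$ with the number $\#\Fix\wh{\f}$ of $\wh{\f}$--fixed classes in the finite--dimensional part of $\wh{G\wr\Z^k}$ whenever $R(\f)<\infty$; so first I would reduce to the case $R(\f)<\infty$ (in the opposite case one checks separately that $\#\Fix\wh\f=\infty$, using Lemma~\ref{lem:R_needed} to split into $R(\overline{\f})=\infty$ and $R(\f')=\infty$). Assuming $R(\f)<\infty$, Lemma~\ref{lem:R_needed} gives $R(\f')<\infty$, so Theorem~\ref{teo:main_for_TBFT} forces $R(\f')=1$; equivalently $\Id_\Sigma-\f'$ is onto, i.e. the dual map $\wh{\f'}$ has \emph{no} nontrivial fixed character on the profinite group $\wh{\Sigma}$. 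The same invariant--summand structure over the $\overline{\f}$--orbits in $\Z^k$ shows that each $\tau_{m}\circ\f'$ satisfies $R(\tau_m\circ\f')=1$ as well, so the summation part of Theorem~\ref{teo:extensions}, combined with (\ref{eq:FHZk}), yields
\[
R(\f)=\sum_{j=1}^{R(\overline{\f})}R(\tau_{m_j}\circ\f')=R(\overline{\f})=|\det(E-M)|,
\]
where $M$ is the matrix of $\overline{\f}$.

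Next I would compute the representation side. Since $\Sigma$ is abelian and normal, the finite--dimensional irreducible unitary representations of $\Sigma\rtimes_\a\Z^k$ are obtained by the little--group method: they are parametrized by a finite $\Z^k$--orbit $O$ of characters of $\Sigma$ (equivalently, a periodic $\chi\in\wh{\Sigma}$, finiteness of the orbit being exactly what makes the induced representation finite--dimensional) together with a character $\psi$ of the stabilizer $L\le\Z^k$ (a finite--index subgroup), the representation being $\Ind(\chi\cdot\psi)$ of dimension $|O|$. Dualizing (\ref{eq:maineq}) shows that $\wh{\f}$ sends the datum $(O,\psi)$ to $(\wh{\f'}\,O,\ \psi')$ for a suitable twist $\psi'$ on the image lattice; in particular a fixed representation requires $O$ to be $\wh{\f'}$--invariant.

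The core of the argument is then a counting lemma for a single $\wh{\f'}$--invariant orbit $O$. Writing $\wh{\f'}\chi=\a(t_0)^*\chi$ for a suitable $t_0\in\Z^k$, the fixed $\psi$ over $O$ are the fixed points of an affine self--map of the dual torus $\wh{L}\cong\T^k$ whose linear part is $\wh{\overline{\f}|_L}$; hence their number is $|\det(E-M_L)|$ if a solvability condition, namely $t_0\in(\Id-\overline{\f})\Z^k+L$, holds, and is $0$ otherwise. The decisive observation is that this solvability condition is \emph{equivalent} to the existence of a $\wh{\f'}$--fixed character inside $O$: if it held for some $O\neq\{1\}$, that orbit would contain a nontrivial $\wh{\f'}$--fixed character, contradicting $R(\f')=1$. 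Therefore only the trivial orbit $O=\{1\}$ contributes, and its contribution is precisely the set of $\wh{\overline{\f}}$--fixed characters of $\Z^k$, of cardinality $|\det(E-M)|$. Comparing with the first paragraph gives $\#\Fix\wh{\f}=|\det(E-M)|=R(\f)$, which is TBFT$_f$.

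I expect the main obstacle to be the representation--theoretic bookkeeping rather than any single hard estimate: setting up the little--group parametrization rigorously for the locally finite abelian group $\Sigma$ with its profinite dual, tracking the $\wh{\f}$--action on the pairs $(O,\psi)$ through the extension data, and, above all, proving the ``$|\det(E-M_L)|$--or--$0$'' dichotomy over each orbit together with the identification of its nonvanishing with the presence of a fixed character in $O$. Two smaller points to nail down are the transfer of $R(\f')=1$ to all $R(\tau_m\circ\f')=1$ (which should follow from the cylindrical/invariant--summand description underlying Theorem~\ref{teo:main_for_TBFT}) and the consistency check in the excluded case $R(\f)=\infty$.
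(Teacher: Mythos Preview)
Your plan is viable but takes a genuinely different route from the paper. The paper never touches Mackey theory. After deducing $R(\f')=1$ from Theorem~\ref{teo:main_for_TBFT}, it simply applies the same theorem to $\tau_z\circ\f$ for each $z\in\Z^k$ (the restriction $(\tau_z\circ\f)'=\a(z)\circ\f'$ again has finite order, because $\sum_{j=0}^{N-1}\overline{\f}^{\,j}(z)=0$ whenever $\overline{\f}^N=\Id$ and $\det(E-M)\ne0$), obtaining $R(\tau_z\circ\f')=1$ for every $z$; this disposes of your ``smaller point'' in one line. By item~3 of Theorem~\ref{teo:extensions}, every Reidemeister class of $\f$ is then the full $\pi$-preimage of a Reidemeister class of $\overline{\f}$, where $\pi:G\wr\Z^k\to\Z^k$. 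Since the classes of $\overline{\f}$ are cosets of the finite-index subgroup $(\Id-\overline{\f})\Z^k$, a single epimorphism $f:\Z^k\to F$ onto a finite group separates them, and $f\circ\pi$ separates the classes of $\f$. The paper now invokes the known equivalence of TBFT$_f$ with separability of Reidemeister classes by finite quotients (\cite{polyc,FelLuchTro}), and the proof is four lines long.

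Your direct count via the little-group method would yield an independent, fully explicit verification that both sides equal $|\det(E-M)|$, which is appealing; the cost is precisely the bookkeeping you anticipate. Your ``decisive observation''---that a $\wh{\f'}$-invariant finite orbit $O$ carries fixed $\psi$'s if and only if it contains a $\wh{\f'}$-fixed character---is indeed the crux and is not immediate: the $G$-conjugacy of pairs $(\chi,\psi)$ involves conjugation by elements of $\Sigma$ (which twists $\psi$ by characters $\ell\mapsto\chi((\a(\ell)-\Id)s)$), not only by $\Z^k$, and one must also carry the cocycle coming from $\f(\Z^k)\not\subset\Z^k$. The paper's separability shortcut sidesteps all of this. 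One last remark: TBFT$_f$ as stated is conditional on $R(\f)<\infty$, so the case $R(\f)=\infty$ is vacuous and need not be treated.
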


\begin{proof}[Proof of Corollary]
By Lemma \ref{lem:R_needed}, $R(\f)<\infty$ implies
$R(\f')<\infty$. Hence, by Theorem \ref{teo:main_for_TBFT}, $R(\f')=1$.
Considering $\t_z \circ \f$ instead of $\f$ from the very beginning, we see that  $R(\t_z \circ \f')=1$, for any $z\in \Z^k$.
Thus, by Theorem \ref{teo:extensions}, Reidemeister classes $\{g\}_\f$ of $\f$ are pull-backs of Reidemeister classes
$\{z\}_{\overline{\f}} $ of $\overline{\f}$ under the natural projection $\pi:G\wr \Z^k \to \Z^k$, i.e. $\{g\}_\f=\pi^{-1}(\{\pi(g)\}_{\overline{\f}})$.  So, if classes of $\overline{\f}$  are separated by an epimorphism $f: \Z^k \to F$ onto a finite group $F$, then classes of $\f$ are separated by $f\circ\pi$. It remains to use the equivalence between TBFT$_f$ and
separability of Reidemeister classes in the case of finite Reidemeister number (see \cite{polyc} and
\cite{FelLuchTro}). 
\end{proof}

\begin{rmk}
	\rm
	In particular, this covers all automorphisms, which were considered in \cite{TroLamp}. Indeed, it was proved there, that all orbits are finite and
	their length is equal to the length of orbits of $\overline{\f}$. But the structure of $\Z^k$ implies that $\overline{\f}$ has finite order (consider generators). Hence, $\f'$ and $\f$ are of finite order.
\end{rmk}

\begin{proof}[Proof of Theorem]
	Suppose, $R(\f')>1$. Then there exists an element $\sigma\in \Sigma$ such that
	$\sigma\not \in \Im(\Id-\f')$. Moreover, $\sigma\not \in \Im(\Id-\f'_\sigma)$,
	where $\f'_\sigma$ is the restriction of $\f'$ onto the $\f'$-invariant subgroup $\Sigma_\sigma$ generated by $\sigma$. In particular, $R(\f'_\sigma)>1$. By the supposition $\Sigma_\sigma$ is a finite group with generators $\sigma,\f'(\sigma),\dots,(\f')^s(\sigma)$ for some $s$. Hence, $\f'_\sigma$ has
	a nontrivial fixed element $\sigma_0$, $\f'_\sigma(\sigma_0)=\sigma_0$ and
	$\sigma_0\ne 0$. For an element $m\in\Z^k$ consider the orbit
	$$
	\a(m)\sigma_0,\quad \f'(\a(m)\sigma_0)=\a(\overline{\f}(m))\sigma_0,\qquad
	(\f')^t(\a(m)\sigma_0)=\a(\overline{\f}^t(m))\sigma_0,\qquad
	\overline{\f}^{t+1}(m)=m. 
	$$ 
	Then $(\f')^{t+1}(\a(m)\sigma_0)=\a(m)\sigma_0$. Passing from $m$ to $nm$,
	$n\in \Z$, $m\in \Z^k$, if necessary, we can assume that the supports in $\Z^k$ of
	$\sigma_0$, $\a(\overline{\f}^j(nm))\sigma_0$, $j=0,\dots,t$, do not intersect. 
	Then $\sum_{j=1}^t \a(\overline{\f}^j(nm))\sigma_0$ is a fixed element of
	$\f'$, which is distinct from $0$ and $\sigma_0$. 
Increasing $n$ ``in sufficiently large steps'' we obtain infinitely many distinct fixed elements in the same way.	
	Then by Lemma \ref{lem:Jab_fin_ord}, $R(\f')=\infty$.
\end{proof}	

\def\cprime{$'$} \def\cprime{$'$} \def\cprime{$'$} \def\cprime{$'$}
  \def\cprime{$'$} \def\cprime{$'$} \def\cprime{$'$} \def\dbar{\leavevmode\hbox
  to 0pt{\hskip.2ex \accent"16\hss}d}
  \def\polhk#1{\setbox0=\hbox{#1}{\ooalign{\hidewidth
  \lower1.5ex\hbox{`}\hidewidth\crcr\unhbox0}}} \def\cprime{$'$}
  \def\cprime{$'$} \def\cprime{$'$}

\end{document}